\documentclass[a4paper,11pt,reqno]{amsart}
\usepackage{a4wide}
\usepackage{amsfonts,amsmath,amssymb}
\usepackage{graphicx}
\usepackage{hyperref}
\usepackage{xcolor}
\usepackage{booktabs}
\usepackage{float}
\usepackage{listings}

\newtheorem{theorem}{Theorem}
\newtheorem{lemma}[theorem]{Lemma}
\newtheorem{corollary}[theorem]{Corollary}
\newtheorem{remark}[theorem]{Remark}
\newtheorem{example}[theorem]{Example}
\newtheorem{proposition}[theorem]{Proposition}

\begin{document}

\title{Enumeration of measurable functions between finite measurable spaces}
\author{D. Kinoti Gikunda, J. Kiprop Tanui \and Benard Kivunge}
\email{gikunda.kinoti@ku.ac.ke}
\email{kipropjohn14@gmail.com}
\email{kivunge.bernard@ku.ac.ke}

\date{\today}

\maketitle
\setcounter{page}{1}

\begin{abstract}
Let \(X\) and \(Y\) be finite sets with \(|X|=n\), \(|Y|=m\), equipped with sigma algebras \(\mathcal A\) and \(\mathcal B\). For arbitrary sigma algebras \(\mathcal A\) on \(X\) and \(\mathcal B\) on \(Y\), we enumerate measurable functions \(f\colon X\to Y\). When \(\mathcal B\) is discrete, the number of pairs \((\mathcal A,f)\) is the Touchard polynomial \(T_n(m)=\sum_k S(n,k)m^k\). For general \(\mathcal B\) with atom sizes \(b_1,\dots,b_r\), the number of pairs \((\mathcal A,f)\) over all sigma algebras \(\mathcal A\) on \(X\) is the  Bell polynomial \(N_{\mathcal B}(n)\) in the power sums \(p_a=\sum_j b_j^a\), with exponential generating function \(\exp(\sum_j(e^{b_jx}-1))\). This specialises to the Touchard polynomial in the discrete case and is maximised by the trivial codomain sigma algebra. We further show that \(N_{\mathcal B}(n) = \mathbb E[Z^n]\) for a compound Poisson random variable \(Z\), and we discuss basic asymptotic growth of \(N_{\mathcal B}(n)\).
\end{abstract}

\section{Introduction and statement}

Measurable functions between finite measurable spaces arise naturally in finite probability spaces and discrete measure theory, where sigma algebras are determined by their atoms. 

An integer partition of \(n\) is a non-decreasing sequence of positive integers summing to \(n\). The number of set partitions of an \(n\)-element set is the Bell number \(B_n\), which admits the exponential generating function 
\[
\sum_{n\ge0}B_n\frac{x^n}{n!}=\exp(e^x-1)
\]
(see e.g. \cite{comtet2012advanced, mansour2013combinatorics, stanley2011ec1}).
Stirling numbers of the second kind \(S(n,k)\) count partitions of an \(n\)-set into exactly \(k\) blocks \cite{comtet2012advanced, stanley2011ec1}.

A finite sigma algebra on a set \(X\) is uniquely determined by its atoms, which form a partition of \(X\). Conversely, any partition generates a sigma algebra by taking all unions of blocks. Since every finite sigma algebra is uniquely determined by its atoms, finite sigma algebras on \(X\) are in bijection with set partitions of \(X\) (see Proposition \ref{prop:bijection}). This observation reduces enumeration problems about measurable functions to problems about partitions.

Given sigma algebras \(\mathcal A\) on \(X\) and \(\mathcal B\) on \(Y\), a function \(f\colon X\to Y\) is measurable if and only if it is constant on each atom of \(\mathcal A\) and its values lie in single atoms of \(\mathcal B\). \footnote{See Lemma~\ref{lem:atom} for a precise statement and proof.} In particular, measurability is completely determined by how \(f\) behaves on the atoms of \(\mathcal A\), and this atomic characterisation underlies all of our counting arguments.

In \cite{mariga2023}, we find preliminary tables for small finite cases with discrete codomain, but no general closed formula is obtained. The present work extends that exploration in two directions. First, we derive a closed formula for all \(n,m\) when the codomain is discrete:
\[
N(n,m)=\sum_{k=1}^n S(n,k)m^k,
\]
the Touchard polynomial \cite{mihoubi2011touchard, touchard1939}. Second, we generalise to arbitrary codomain sigma algebras. Let \(\mathcal B\) have atoms of sizes \(b_1,\dots,b_r\) with \(\sum_{j=1}^r b_j=m\). Define 
\[
p_a=\sum_{j=1}^r b_j^a.
\]
For a domain sigma algebra \(\mathcal A\) with atom sizes \(a_1,\dots,a_k\), the number of measurable functions is
\[
M(\mathcal A,\mathcal B)=\prod_{i=1}^k p_{a_i}.
\]
Summing over all set partitions of \(X=[n]\) gives the complete Bell polynomial \cite{comtet2012advanced, stanley2011ec1}
\[
N_{\mathcal B}(n)=\sum_{\pi\vdash[n]}\prod_{A\in\pi} p_{|A|}.
\]
The exponential generating function is
\[
\sum_{n\ge0} N_{\mathcal B}(n)\frac{x^n}{n!}=\exp\left(\sum_{j=1}^r (e^{b_jx}-1)\right).
\]

While the exponential formula for weighted set partitions is classical, applying it to the enumeration of measurable functions yields a new probabilistic interpretation of this counting problem. This probabilistic viewpoint on the finite measurable‐function enumeration is the main contribution of the present work.

We prove the following theorem.

\begin{theorem}\label{thm:main}
Let \(\mathcal B\) be a sigma algebra on a finite set \(Y\) with atoms of sizes \(b_1,\dots,b_r\), and \(\sum_{j=1}^r b_j=m\). Let \(X\) have size \(n\). The number of measurable pairs \((\mathcal A,f)\), where \(\mathcal A\) is any sigma algebra on \(X\) and \(f\colon X\to Y\) is measurable with respect to \(\mathcal A\) and \(\mathcal B\), is the complete Bell polynomial
\[
N_{\mathcal B}(n)=\sum_{\pi\vdash[n]}\prod_{A\in\pi} p_{|A|},\qquad p_a=\sum_{j=1}^r b_j^a.
\]
The exponential generating function of \(N_{\mathcal B}(n)\) is
\begin{equation}\label{eq:general-egf}
\sum_{n\ge0} N_{\mathcal B}(n)\frac{x^n}{n!}
= \exp\left(\sum_{j=1}^r (e^{b_jx}-1)\right).
\end{equation}
Equivalently, \(N_{\mathcal B}(n)=\mathbb E[Z^n]\), where \(Z=\sum_{j=1}^r b_jN_j\) with independent \(N_j\sim\operatorname{Poisson}(1)\). When \(\mathcal B\) is discrete, this reduces to \(N(n,m)=T_n(m)=\mathbb E[P^n]\), \(P\sim\operatorname{Poisson}(m)\).
\end{theorem}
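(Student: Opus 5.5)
The plan is to reduce everything to weighted set partitions through the atomic description of finite sigma algebras, and then read off the generating function and the moment interpretation.

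\textbf{Counting for a fixed \(\mathcal A\).} By Proposition~\ref{prop:bijection}, sigma algebras \(\mathcal A\) on \(X=[n]\) correspond bijectively to set partitions \(\pi\) of \([n]\), where \(\pi\) is the set of atoms. By Lemma~\ref{lem:atom}, a function \(f\) is \((\mathcal A,\mathcal B)\)-measurable exactly when it satisfies two conditions:
\begin{itemize}
\item its restriction to each atom \(A\in\pi\) takes values in a single atom \(B_j\) of \(\mathcal B\);
\item choices on different atoms of \(\mathcal A\) are independent.
\end{itemize}
The second condition is what makes the count multiplicative. For a fixed atom \(A\) with \(|A|=a\), the admissible restrictions \(f|_A\) are the maps \(A\to B_j\) for some \(j\). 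The sets of such maps for distinct \(j\) are disjoint, since \(A\neq\emptyset\) and the \(B_j\) are disjoint. Hence there are \(\sum_j b_j^a=p_a\) admissible restrictions.

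It is worth double-checking this against Lemma~\ref{lem:atom}. On its face, measurability only requires \(f^{-1}(B_j)\in\mathcal A\), which is exactly "each atom of \(\mathcal A\) maps into one atom of \(\mathcal B\)". It does not require \(f\) to be constant on atoms, because singletons of \(Y\) need not be measurable. This matches the stated formula \(M(\mathcal A,\mathcal B)=\prod_i p_{a_i}\); I will use the lemma in this form. Multiplying over atoms gives \(M(\mathcal A,\mathcal B)=\prod_{A\in\pi}p_{|A|}\), and summing over \(\pi\) gives the formula for \(N_{\mathcal B}(n)\).

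\textbf{Generating function.} I will apply the exponential formula for set partitions with block weights \(w_a=p_a\). It gives
\[
\sum_{n\ge0}N_{\mathcal B}(n)\frac{x^n}{n!}=\exp\Bigl(\sum_{a\ge1}p_a\frac{x^a}{a!}\Bigr).
\]
Interchanging the finite sum over \(j\) yields
\[
\sum_{a\ge1}p_a\frac{x^a}{a!}=\sum_j\bigl(e^{b_jx}-1\bigr),
\]
which is \eqref{eq:general-egf}. If a self-contained argument is preferred, I would instead prove the recurrence obtained by conditioning on the block containing the element \(n+1\):
\[
N_{\mathcal B}(n+1)=\sum_{i=0}^{n}\binom{n}{i}p_{i+1}N_{\mathcal B}(n-i).
\]
This is equivalent to \(F'=G'F\) with \(F(0)=1\), where \(F\) is the exponential generating function and \(G=\sum_j(e^{b_jx}-1)\).

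\textbf{Probabilistic form.} For \(N\sim\operatorname{Poisson}(1)\), the moment generating function is \(\mathbb E[e^{tN}]=\exp(e^t-1)\). By independence,
\[
\mathbb E[e^{xZ}]=\prod_j\exp\bigl(e^{b_jx}-1\bigr),
\]
which coincides with the right-hand side of \eqref{eq:general-egf}. Since \(Z\) is a bounded-below lattice variable with finite exponential moments for all \(x\), expanding in \(x\) and comparing coefficients gives \(N_{\mathcal B}(n)=\mathbb E[Z^n]\). In the discrete case all \(b_j=1\), so \(p_a=m\) for every \(a\) and \(\prod_{A\in\pi}p_{|A|}=m^{|\pi|}\). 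Grouping partitions by their number of blocks then gives \(\sum_kS(n,k)m^k=T_n(m)\). Also \(Z=\sum_j N_j\sim\operatorname{Poisson}(m)\), so the moment identity specialises to \(T_n(m)=\mathbb E[P^n]\).

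\textbf{Main obstacle.} The analytic steps are routine once the exponential formula is available. The only delicate point is the first step: the per-atom count must be exactly \(p_a\), with no double counting across atoms of \(\mathcal B\) and no constancy requirement. I will handle this by invoking Lemma~\ref{lem:atom} in its preimage form, and by noting the disjointness of the sets of maps \(A\to B_j\) for different \(j\).
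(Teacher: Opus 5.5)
Your proposal is correct and follows essentially the paper's route. You obtain the per-atom count $p_a$ via Lemma~\ref{lem:atom}, which is the paper's Proposition~\ref{prop:M}; you then sum over partitions, apply the exponential formula with block weights $p_a$, interchange the finite sum over $j$, and match $\mathbb E[e^{xZ}]$ with the EGF. Your extra points only make explicit what the paper leaves implicit: the disjointness of the sets of maps $A\to B_j$, the finiteness of the moment generating function (which justifies comparing coefficients), and the explicit grouping by number of blocks in the discrete case. One small correction of phrasing: the ``independence across atoms'' you list is not a measurability condition on $f$, but the observation that the set of measurable maps is a product over the atoms.
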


The paper is organised as follows. Section 2 establishes the atomic structure and counting formula for fixed sigma algebras. Section 3 treats the discrete codomain case. Section 4 proves the generalisation. Section 5 gives tables and examples. Section 6 discusses asymptotics and probabilistic interpretations. Appendix A provides Python code for computational verification of the tables.

\section{Preliminaries}

Let \(X\) be a finite set. A sigma algebra \(\mathcal A\subseteq\mathcal P(X)\) is a collection containing \(\emptyset\) and \(X\), closed under complements and finite unions. This is equivalent to the usual definition using countable unions. The atoms of \(\mathcal A\) are its minimal non‑empty elements.

\begin{proposition}\label{prop:bijection}
For a finite set \(X\), every sigma algebra \(\mathcal A\) is uniquely determined by its atoms. The atoms form a partition of \(X\), and every member of \(\mathcal A\) is a union of atoms. Conversely, any partition of \(X\) generates a sigma algebra.
\end{proposition}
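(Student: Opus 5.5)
The plan has two halves. For the forward direction I would define, for each \(x\in X\), the set \(A_x=\bigcap\{E\in\mathcal A: x\in E\}\). Since \(X\) is finite, \(\mathcal A\) is finite, so this is a finite intersection and \(A_x\in\mathcal A\). It is nonempty because it contains \(x\).

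The first step is to show that \(A_x\) is an atom. Suppose \(\emptyset\neq F\subseteq A_x\) with \(F\in\mathcal A\). If \(x\notin F\), then \(x\in F^c\in\mathcal A\), so \(A_x\subseteq F^c\), which contradicts \(F\subseteq A_x\) and \(F\neq\emptyset\). Hence \(x\in F\), which forces \(A_x\subseteq F\), and therefore \(F=A_x\).

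Next, any two atoms are disjoint or equal, because their intersection lies in \(\mathcal A\) and is contained in each of them, so minimality applies. Every atom \(A\) equals \(A_x\) for any \(x\in A\), since \(A_x\subseteq A\) and minimality gives equality. So the atoms are exactly the sets \(A_x\). They cover \(X\), are pairwise disjoint and are nonempty, so they form a partition.

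Every \(E\in\mathcal A\) satisfies \(E=\bigcup_{x\in E}A_x\), because \(A_x\subseteq E\) whenever \(x\in E\). Thus \(\mathcal A\) consists precisely of the unions of atoms. It is therefore determined by its atoms, and distinct sigma algebras have distinct atom sets.

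For the converse, given a partition \(\pi\) of \(X\), I would let \(\sigma(\pi)\) be the family of all unions of subfamilies of \(\pi\). It contains \(\emptyset\) (the empty union) and \(X\). It is closed under finite unions trivially. It is closed under complements because the complement of a union of blocks is the union of the remaining blocks, using disjointness and covering. Finally, I would check that the atoms of \(\sigma(\pi)\) are exactly the blocks of \(\pi\): each block is a minimal nonempty member, and any nonempty member contains a block. This shows the correspondence is a bijection.

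The only step needing real care is that \(A_x\) is an atom, specifically the complement trick; everything else is routine set manipulation. Finiteness is used only to guarantee \(A_x\in\mathcal A\).
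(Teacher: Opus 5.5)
Your proof is correct and is essentially the paper's argument: by closure under complements, your set \(A_x=\bigcap\{E\in\mathcal A: x\in E\}\) is exactly the equivalence class of \(x\) under the paper's relation ``no measurable set separates \(x\) and \(y\)'', and your converse is the same as the paper's. Your write-up is in fact more complete, since your complement argument explicitly proves that \(A_x\) is minimal, which the paper only asserts.
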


\begin{proof}
Define an equivalence relation on \(X\) by \(x\sim y\) if no measurable set separates them; that is, for every \(A\in\mathcal A\), either both \(x,y\in A\) or both \(x,y\notin A\). The equivalence classes are precisely the atoms: each class is measurable (by closure under finite unions/intersections of separating sets), and any measurable set is a union of such classes. Thus the atoms form a partition of \(X\), and every member of \(\mathcal A\) is a union of atoms. 

Conversely, given a partition of \(X\), the collection of all unions of its blocks is easily checked to be a sigma algebra whose atoms are exactly those blocks.
\end{proof}

Thus finite sigma algebras are in bijection with set partitions. The number of sigma algebras on an \(n\)-set is the Bell number 
\[
B_n=\sum_{k=1}^n S(n,k),
\]
with exponential generating function (EGF) 
\[
\sum_{n\ge0}B_n \frac{x^n}{n!}=\exp(e^x-1)
\]
(see \cite{comtet2012advanced, stanley2011ec1}).

Having established that finite sigma algebras are in bijection with set partitions, we now turn to the question of when a function between two such structures is measurable. The following lemma is the key structural fact that underlies all our counting arguments: measurability is completely determined by how a function behaves on the atoms of the domain sigma algebra.

\begin{lemma}\label{lem:atom}
Let \(X,Y\) be finite sets with sigma algebras \(\mathcal A,\mathcal B\). A function \(f\colon X\to Y\) is measurable if and only if it maps each atom of \(\mathcal A\) into a single atom of \(\mathcal B\).
\end{lemma}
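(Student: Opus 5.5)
Using Proposition~\ref{prop:bijection}, we fix the atom partitions: $A_1,\dots,A_k$ for $\mathcal A$ and $B_1,\dots,B_r$ for $\mathcal B$. Every member of $\mathcal B$ is a union of the $B_j$, and every member of $\mathcal A$ is a union of the $A_i$. Measurability of $f$ means $f^{-1}(E)\in\mathcal A$ for every $E\in\mathcal B$. Preimages commute with unions, so it suffices to test this on the atoms $B_j$. We will therefore show that $f$ is measurable if and only if each preimage $f^{-1}(B_j)$ is a union of $\mathcal A$-atoms. We will then translate this condition into the statement of the lemma.

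For the \emph{if} direction, suppose each $A_i$ is mapped into a single atom $B_{j(i)}$. Since the $B_j$ are disjoint, $f(A_i)\subseteq B_j$ holds exactly when $j=j(i)$, and otherwise $f(A_i)\cap B_j=\emptyset$. Hence $f^{-1}(B_j)=\bigcup_{i:\,j(i)=j}A_i$, which lies in $\mathcal A$. For a general $E=\bigcup_{j\in J}B_j\in\mathcal B$, we get $f^{-1}(E)=\bigcup_{j\in J}f^{-1}(B_j)\in\mathcal A$ by closure under finite unions.

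For the \emph{only if} direction, suppose $f$ is measurable and fix an atom $A_i$ and a point $x\in A_i$. Let $B_j$ be the $\mathcal B$-atom containing $f(x)$. Then $f^{-1}(B_j)\in\mathcal A$ is a nonempty union of $\mathcal A$-atoms that contains $x$. Because the atoms partition $X$, this union must contain all of $A_i$, so $f(A_i)\subseteq B_j$.

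The only delicate step is this last one: it uses the fact that a measurable set meeting an atom must contain that whole atom. This follows from minimality of atoms, since $A_i\cap f^{-1}(B_j)$ is a nonempty member of $\mathcal A$ contained in $A_i$ and so equals $A_i$. The rest is routine bookkeeping with preimages.

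We also plan to clarify the informal phrasing in the introduction, which says ``constant on each atom''. That phrasing is too strong for a non-discrete codomain. The correct condition is the one in the lemma as stated, and we will remark that in the discrete case, where every atom of $\mathcal B$ is a singleton, it reduces to constancy on the atoms of $\mathcal A$.
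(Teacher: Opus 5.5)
Your proof is correct and takes essentially the same approach as the paper. The forward direction uses minimality of the atom \(A_i\) applied to the nonempty measurable set \(A_i\cap f^{-1}(B_j)\); the paper phrases this as a contradiction via \(A\cap f^{-1}(C_1)\) and \(A\setminus f^{-1}(C_1)\). The converse writes each preimage \(f^{-1}(E)\), \(E\in\mathcal B\), as a union of \(\mathcal A\)-atoms, and your pointwise version states this more precisely than the paper, which loosely speaks of the atom \(A\) itself (rather than \(f(A)\)) meeting or lying in atoms of \(\mathcal B\).
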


\begin{proof}
If \(f\) is measurable, then for any atom \(C\in\mathcal B\), the pre-image \(f^{-1}(C)\) is a union of atoms of \(\mathcal A\). If an atom \(A\in\mathcal A\) intersected two distinct atoms \(C_1,C_2\) of \(\mathcal B\), then \(A\cap f^{-1}(C_1)\) and \(A\setminus f^{-1}(C_1)\) would be non-empty proper measurable subsets of \(A\), contradicting atomicity. Thus each atom of \(\mathcal A\) lies entirely inside a single atom of \(\mathcal B\).

Conversely, if each atom of \(\mathcal A\) is contained in a single atom of \(\mathcal B\), then for any \(C\in\mathcal B\), the set \(f^{-1}(C)\) is a union of atoms of \(\mathcal A\), hence measurable. 
\end{proof}

Let $\mathcal A$ have atoms $A_1,\dots,A_k$ with $|A_i|=a_i$ and 
$\sum_{i=1}^k a_i = n$, and let $\mathcal B$ have atoms $C_1,\dots,C_r$ with 
$|C_j|=b_j$ and $\sum_{j=1}^r b_j = m$. For $a\ge 1$ set
\[
p_a = \sum_{j=1}^r b_j^a.
\]

With the atomic characterisation of measurability in hand, we can now count, for a fixed pair of sigma algebras, the number of measurable functions. The following proposition gives the fundamental building block for the enumeration.

\begin{proposition}\label{prop:M}
For fixed $(\mathcal A,\mathcal B)$, the number of measurable functions 
$f\colon X\to Y$ is
\begin{equation}\label{eq:fixedcount}
M(\mathcal A,\mathcal B) = \prod_{i=1}^k p_{a_i}.
\end{equation}
\end{proposition}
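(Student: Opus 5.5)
The count follows from Lemma~\ref{lem:atom} by a product-of-independent-choices argument. By that lemma, a function \(f\colon X\to Y\) is measurable exactly when, for every atom \(A_i\) of \(\mathcal A\), the image \(f(A_i)\) is contained in a single atom of \(\mathcal B\). This condition constrains each atom separately. So I will first show that the set of measurable functions is in bijection with the Cartesian product \(\prod_{i=1}^k F_i\). Here \(F_i\) is the set of maps \(g\colon A_i\to Y\) whose image lies in a single atom of \(\mathcal B\). The bijection sends \(f\) to its tuple of restrictions \((f|_{A_1},\dots,f|_{A_k})\). It is well defined and bijective because the atoms \(A_1,\dots,A_k\) partition \(X\) by Proposition~\ref{prop:bijection}, and a function on \(X\) is the same thing as a choice of function on each block. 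This gives \(M(\mathcal A,\mathcal B)=\prod_i |F_i|\).

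Next I compute \(|F_i|\) by splitting according to which atom of \(\mathcal B\) receives the image. Write \(F_i=\bigcup_{j=1}^r F_{i,j}\), where \(F_{i,j}\) is the set of all maps \(A_i\to C_j\). Then \(|F_{i,j}|=b_j^{a_i}\), since each of the \(a_i\) points of \(A_i\) may go to any of the \(b_j\) points of \(C_j\).

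The one point needing care is that this union is disjoint. Atoms are nonempty, so \(a_i\ge 1\) and \(A_i\) contains at least one point \(x\). The atoms \(C_j\) of \(\mathcal B\) are pairwise disjoint, so \(f(x)\) lies in exactly one \(C_j\), and a map in \(F_{i,j}\cap F_{i,j'}\) with \(j\ne j'\) would need \(f(x)\in C_j\cap C_{j'}=\emptyset\). The atom \(C_j\) is therefore uniquely determined by the map. Hence
\[
|F_i|=\sum_{j=1}^r b_j^{a_i}=p_{a_i}.
\]

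Multiplying over \(i\) then yields \eqref{eq:fixedcount}. I expect no real obstacle. The only subtleties are the disjointness just discussed and the observation that Lemma~\ref{lem:atom} does not force \(f\) to be constant on atoms. The count therefore uses \(b_j^{a_i}\) rather than \(b_j\), which is exactly where the power sums \(p_a\) come from.
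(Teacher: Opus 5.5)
Your proposal is correct and follows the paper's own argument. By Lemma~\ref{lem:atom}, each atom \(A_i\) goes into a single \(C_j\), which gives \(b_j^{a_i}\) maps for each \(j\); these sum to \(p_{a_i}\), and the count is multiplied across atoms. You also make explicit two points the paper leaves implicit: the restriction bijection with \(\prod_i F_i\), and the disjointness of the union over \(j\), which holds because atoms are nonempty.
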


\begin{proof}
By Lemma~\ref{lem:atom}, each atom $A_i$ must be mapped entirely into 
some codomain atom $C_j$. Once $C_j$ is fixed, each of the $a_i$ points of 
$A_i$ can be assigned independently to any of the $b_j$ elements of $C_j$, 
giving $b_j^{a_i}$ functions $A_i\to C_j$. Summing over all choices of $j$ 
yields $p_{a_i}$ possibilities for $A_i$, and independence across atoms gives 
the product \eqref{eq:fixedcount}.
\end{proof}

Two extreme cases serve as useful sanity checks for the formula in Proposition 4. They also illustrate how the count interpolates between the minimum and maximum possible values as the domain sigma algebra varies from the coarsest to the finest structure.

\begin{remark}[Boundary cases]
Two boundary cases are instructive. If \(\mathcal A\) is the trivial sigma algebra \(\{\emptyset,X\}\) (one atom, \(k=1\)), then the only measurable functions are the \(m\) constant functions. If \(\mathcal A\) is the discrete sigma algebra on \(X\) (atoms are singletons, \(k=n\)), then every function \(f\colon X\to Y\) is measurable, giving \(m^n\) functions, the maximum possible. These extremes bracket all intermediate cases.
\end{remark}

\section{The discrete codomain case}

We first consider the special case where the codomain \(Y\) is equipped with the discrete sigma algebra. In this setting, every subset of \(Y\) is measurable, so the atoms of \(\mathcal B\) are precisely the singletons \(\{y\}\) for \(y\in Y\). Thus \(r=m\) and \(b_j=1\) for all \(j=1,\dots,m\).

From a measure-theoretic perspective, this is the standard structure: any function \(f\colon X\to Y\) is automatically measurable with respect to the codomain sigma algebra, but the domain sigma algebra \(\mathcal A\) still imposes constraints. Specifically, by Lemma~\ref{lem:atom}, measurability forces \(f\) to be constant on each atom of \(\mathcal A\). This is the classical characterisation of measurable functions into a discrete space: a function is measurable if and only if it does not separate points that are indistinguishable by the domain sigma algebra.

For a fixed domain sigma algebra \(\mathcal A\) with \(k\) atoms, since \(b_j=1\), we have 
\[
p_a = \sum_{j=1}^m 1 = m, 
\]
independent of \(a\), hence \eqref{eq:fixedcount} becomes
\[
M(\mathcal A,\mathcal B_{\mathrm{discrete}})=\prod_{i=1}^k \left(\sum_{j=1}^m 1^{a_i}\right)=m^k.
\]
Thus, once the atomic decomposition of the domain is fixed, there are exactly \(m^k\) measurable functions: we choose, independently for each atom, one of the \(m\) values in \(Y\) to which the entire atom maps.

Before stating the general formula for the discrete case, we illustrate the counting principle with a concrete example that will make the subsequent abstraction more transparent.

\begin{example}
Let \(X=\{1,2,3\}\) and consider the partition \(\pi=\{\{1,2\},\{3\}\}\). The corresponding sigma algebra \(\mathcal A_\pi\) has two atoms: \(\{1,2\}\) and \(\{3\}\). Now let \(Y=\{a,b,c\}\), so \(|Y|=m=3\). A function \(f\colon X\to Y\) is \(\mathcal A_\pi\)-measurable iff it is constant on each atom. That means
\[
f(1)=f(2),\qquad f(3)\text{ arbitrary}.
\]
Hence \(f(1)=f(2)\) can be any of the 3 values, and \(f(3)\) can independently be any of the 3 values, giving \(3\cdot 3=3^2=9\) measurable functions for this specific sigma algebra. For example, the functions
\[
f(1)=f(2)=a,\; f(3)=b;\qquad
g(1)=g(2)=c,\; g(3)=c;\qquad
h(1)=h(2)=b,\; h(3)=a
\]
are all measurable. The constant function \(f\equiv a\) is also measurable (it corresponds to the case where both atoms map to \(a\)).
\end{example}

Summing over all partitions of \(X\) gives the total number of measurable pairs \((\mathcal A,f)\), where \(\mathcal A\) is any sigma algebra on \(X\) and \(f\colon X\to Y\) is measurable with respect to \(\mathcal A\). The following proposition formalises this discrete-codomain enumeration, showing that the count is precisely the Touchard polynomial, a classical object in enumerative combinatorics with a rich probabilistic interpretation.

\begin{proposition}\label{prop:discrete}
For finite sets \(X\) and \(Y\) with \(|X|=n\) and \(|Y|=m\), where \(Y\) is equipped with the discrete sigma algebra, the number of measurable pairs \((\mathcal A,f)\), with \(\mathcal A\) any sigma algebra on \(X\) and \(f\colon X\to Y\) measurable with respect to \(\mathcal A\), is
\begin{equation}\label{eq:touchard}
N(n,m)=\sum_{k=1}^n S(n,k)m^k=T_n(m).
\end{equation}
This is the Touchard polynomial \cite{mihoubi2011touchard, touchard1939}. Its exponential generating function is
\begin{equation}\label{eq:touchard-egf}
\sum_{n\ge0} T_n(m)\frac{x^n}{n!}=e^{m(e^x-1)}.
\end{equation}
Consequently, \(T_n(m)=\mathbb E[P^n]\) with \(P\sim\operatorname{Poisson}(m)\). Setting \(m=1\) gives Dobinski's formula \(B_n=\mathbb E[P^n]\) with \(P\sim\operatorname{Poisson}(1)\).

The Touchard recurrence is
\begin{equation}\label{eq:touchard-recurrence}
T_{n+1}(m)=m\sum_{j=0}^n \binom{n}{j}T_j(m),\qquad T_0(m)=1,
\end{equation}
which follows by differentiating \eqref{eq:touchard-egf} with respect to \(x\) and extracting coefficients.
\end{proposition}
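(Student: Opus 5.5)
The plan is to sum the fixed-pair count of Proposition~\ref{prop:M} over all domain sigma algebras, using the bijection of Proposition~\ref{prop:bijection} between sigma algebras on \(X\) and set partitions of \(X\). For the discrete sigma algebra on \(Y\) we have \(b_j=1\) for all \(j\), so \(p_a=m\) for every \(a\). A sigma algebra \(\mathcal A\) with \(k\) atoms therefore admits exactly \(m^k\) measurable functions. There are \(S(n,k)\) partitions of \([n]\) into \(k\) blocks, so
\[
N(n,m)=\sum_{\pi\vdash[n]} m^{|\pi|}=\sum_{k=1}^n S(n,k)m^k=T_n(m),
\]
which is \eqref{eq:touchard}.

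For the exponential generating function \eqref{eq:touchard-egf} I would use the standard generating function for Stirling numbers,
\[
\sum_{n\ge0}S(n,k)\frac{x^n}{n!}=\frac{(e^x-1)^k}{k!}.
\]
This identity comes from the exponential formula, since a set partition into \(k\) blocks is an unordered \(k\)-tuple of nonempty sets. Multiplying by \(m^k\) and summing over \(k\) gives
\[
\sum_{k\ge0}\frac{\bigl(m(e^x-1)\bigr)^k}{k!}=e^{m(e^x-1)}.
\]
Interchanging the two sums is harmless as an identity of formal power series.

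For the moment statement, let \(P\sim\operatorname{Poisson}(m)\). Its moment generating function is
\[
\mathbb E[e^{xP}]=\sum_{\ell\ge0}e^{-m}\frac{m^\ell}{\ell!}e^{x\ell}=e^{m(e^x-1)},
\]
which is finite for all real \(x\). Expanding \(e^{xP}\) and exchanging expectation with summation is justified by Tonelli for \(x\ge0\) and by analyticity in general. Comparing coefficients of \(x^n/n!\) with \eqref{eq:touchard-egf} then gives \(T_n(m)=\mathbb E[P^n]\). Setting \(m=1\), where \(T_n(1)=\sum_k S(n,k)=B_n\), yields Dobinski's formula.

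For the recurrence, write \(F(x)=e^{m(e^x-1)}\). Then \(F'(x)=me^xF(x)\). The coefficient of \(x^n/n!\) in \(F'\) is \(T_{n+1}(m)\). On the right-hand side, the coefficient of \(x^n/n!\) in \(e^xF(x)\) is the binomial convolution \(\sum_j\binom nj T_j(m)\). Together with \(T_0(m)=F(0)=1\), this gives \eqref{eq:touchard-recurrence}.

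Little here is a genuine obstacle. The only points needing care are the Stirling generating function, which I would quote from \cite{stanley2011ec1} or derive from the exponential formula, and the justification for exchanging expectation with power-series expansion in the Poisson step.
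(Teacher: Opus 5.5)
Your proposal is correct and follows essentially the paper's route. The paper sets \(p_a=m\) to get \(m^k\) measurable functions for each \(k\)-atom sigma algebra, groups partitions by block count to obtain \(\sum_k S(n,k)m^k\), and gets the Poisson-moment statement and the recurrence by comparing with the moment generating function and differentiating \(e^{m(e^x-1)}\). The one minor difference is that you derive the EGF from the Stirling column generating functions \((e^x-1)^k/k!\), whereas the paper obtains it as the \(b_j=1\) case of Theorem~\ref{thm:main}, i.e.\ from the exponential formula with constant block weight \(p_a=m\); the two derivations are equivalent.
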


The formula in Proposition 7 has a simple combinatorial interpretation that is worth isolating.

\begin{corollary}[Bijective interpretation]\label{cor:bijection}
The number \(N(n,m)\) equals the number of pairs \((\pi,c)\), where \(\pi\) is a partition of \(X\) and \(c\) assigns to each block of \(\pi\) one of the \(m\) values in \(Y=\{1,\dots,m\}\).
\end{corollary}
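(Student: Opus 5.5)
The plan is to unpack what ``measurable pair'' means via Proposition~\ref{prop:bijection} and Lemma~\ref{lem:atom}, and then exhibit an explicit bijection between such pairs and pairs \((\pi,c)\). Fix \(Y=\{1,\dots,m\}\) with the discrete sigma algebra. By Proposition~\ref{prop:bijection}, each sigma algebra \(\mathcal A\) on \(X\) corresponds to a unique partition \(\pi\) of \(X\), namely its set of atoms; conversely each partition \(\pi\) gives a unique sigma algebra \(\mathcal A_\pi\) whose atoms are the blocks of \(\pi\). So it suffices to show that, for fixed \(\pi\), the \(\mathcal A_\pi\)-measurable functions \(f\colon X\to Y\) are in bijection with colourings \(c\colon\pi\to Y\).

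For fixed \(\pi\), I will use the map \(\Phi(f)=c_f\), where \(c_f(A)\) is the common value of \(f\) on the block \(A\). This is well defined because the atoms of the discrete sigma algebra are singletons, so Lemma~\ref{lem:atom} forces \(f\) to map each atom \(A\) of \(\mathcal A_\pi\) into a single point. The inverse is \(\Psi(c)=f_c\), defined by \(f_c(x)=c(A)\) for the unique block \(A\ni x\). This \(f_c\) is constant on atoms, so the converse direction of Lemma~\ref{lem:atom} shows it is measurable. It is immediate that \(\Phi\) and \(\Psi\) are mutually inverse.

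Combining these over all \(\pi\) gives a bijection \((\mathcal A,f)\mapsto(\pi_{\mathcal A},c_f)\) from measurable pairs to pairs \((\pi,c)\). Hence \(N(n,m)\) equals the number of such \((\pi,c)\), which is the statement of the corollary. As a consistency check, grouping the pairs \((\pi,c)\) by the number \(k\) of blocks gives \(S(n,k)m^k\) pairs for each \(k\), recovering \eqref{eq:touchard}.

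There is no real obstacle here. The only point requiring care is that the two correspondences are genuinely bijections rather than mere surjections. This holds because Proposition~\ref{prop:bijection} gives uniqueness of the sigma algebra determined by its atoms, and because \(f\) is recovered from \(c_f\) on every point of \(X\), since the blocks cover \(X\).
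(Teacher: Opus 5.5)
Your proposal is correct and takes essentially the same approach as the paper. You identify \(\mathcal A\) with its partition of atoms, and for fixed \(\pi\) you biject measurable \(f\) (constant on blocks) with colourings \(c\colon\pi\to Y\) via the obvious inverse; you are just more explicit than the paper in invoking Proposition~\ref{prop:bijection} and both directions of Lemma~\ref{lem:atom}.
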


\begin{proof}
Given a sigma algebra \(\mathcal A_\pi\) with atoms (blocks) \(B_1,\dots,B_k\), a measurable function \(f\colon X\to Y\) is constant on each block and hence induces a labelling of the blocks by elements of \(Y\). Conversely, any such labelling determines a unique measurable function by assigning to every point in a block the value of that block. This correspondence is bijective. Therefore, for a partition with \(k\) blocks, there are exactly \(m^k\) measurable functions. Summing over all partitions of \(X\) gives the formula.
\end{proof}

\begin{remark}
Thus \(N(n,m)\) counts the number of block-coloured set partitions of \(X\), where each block is labelled by an element of \(Y\). This viewpoint is standard in partition enumeration \cite{comtet2012advanced, stanley2011ec1}.
\end{remark}

To see the formula \eqref{eq:touchard} in action, we enumerate all the sigma algebras on a 3-element set and verify that the total number of measurable functions matches the Touchard polynomial value.

\begin{example}
Let \(X=\{1,2,3\}\) and \(m=2\). The five sigma algebras correspond to the five partitions of \(X\):
\begin{itemize}
\item \(k=1\): one partition \(\{\{1,2,3\}\}\), giving \(2^1=2\) functions, namely the two constant functions.
\item \(k=2\): three partitions, for example \(\{\{1\},\{2,3\}\}\), \(\{\{2\},\{1,3\}\}\), and \(\{\{3\},\{1,2\}\}\); each gives \(2^2=4\) functions, for a total of \(3\cdot4=12\).
\item \(k=3\): one partition \(\{\{1\},\{2\},\{3\}\}\), i.e.\ the discrete sigma algebra, giving \(2^3=8\) functions.
\end{itemize}
The total is \(2+12+8=22\), matching \(N(3,2)\).
\end{example}

The discrete case corrects and generalises the preliminary tables in \cite{mariga2023}, and further derives a closed formula. For example, \(S(6,k)=1,31,90,65,15,1\) for \(k=1,\dots,6\), so
\[
T_6(3)=3+31\cdot9+90\cdot27+65\cdot81+15\cdot243+729=12351.
\]

The Touchard polynomial also admits a probabilistic interpretation: it is the \(n\)-th moment of a Poisson random variable with mean \(m\). This connection, together with the recurrence \eqref{eq:touchard-egf}, provides efficient computational tools for generating the values. The following table lists the correct values for small \(n\) and \(m\). (The code used to generate these values is provided in Appendix A.)

\begin{table}[H]
\centering
\caption{Values of \(T_n(m)\) for small \(n\) and \(m\)}
\label{tab:touchard}
\[
\begin{array}{c|cccc}
n & m=1 & m=2 & m=3 & m=4 \\ \hline
1 & 1 & 2 & 3 & 4 \\
2 & 2 & 6 & 12 & 20 \\
3 & 5 & 22 & 57 & 116 \\
4 & 15 & 94 & 309 & 756 \\
5 & 52 & 454 & 1866 & 5428 \\
6 & 203 & 2430 & 12351 & 42356
\end{array}
\]
\end{table}

The \(m=1\) column recovers the Bell numbers \(B_n\), as expected: when \(|Y|=1\), every sigma algebra admits exactly one measurable function (the constant function), so the count equals the number of sigma algebras. This observation provides a useful consistency check for the general formula.

The Stirling numbers appearing in \eqref{eq:touchard} have a rich combinatorial structure that is worth examining in detail. The following example for \(n=6\) shows how the Stirling numbers aggregate partitions of different block-size patterns.

\begin{example}
Let \(X=\{1,2,3,4,5,6\}\). For a fixed number of blocks \(k\), the number of sigma algebras with exactly \(k\) atoms is the Stirling number \(S(6,k)\). The values are:
\[
\begin{array}{c|cccccc}
k & 1 & 2 & 3 & 4 & 5 & 6 \\ \hline
S(6,k) & 1 & 31 & 90 & 65 & 15 & 1
\end{array}
\]
To see how \(S(6,3)=90\) accounts for different block sizes, note the three possible block size patterns:
\[
(2,2,2):\frac{6!}{(2!)^3\cdot3!}=15,\qquad
(3,2,1):\frac{6!}{3!\,2!\,1!}=60,\qquad
(4,1,1):\frac{6!}{4!\,1!\,1!\cdot2!}=15.
\]
Thus \(15+60+15=90\). This decomposition illustrates how the Stirling numbers aggregate partitions of different block-size patterns.
\end{example}

\section{Generalisation to arbitrary codomain sigma algebras}

Let \(\mathcal B\) be arbitrary with atom sizes \(b_1,\dots,b_r\), \(\sum_{j=1}^r b_j=m\). Define \(p_a=\sum_{j=1}^r b_j^a\). For fixed \(\mathcal A\) with atom sizes \(a_1,\dots,a_k\), \eqref{eq:fixedcount} gives
\begin{equation}\label{eq:general-fixed}
M(\mathcal A,\mathcal B)=\prod_{i=1}^k p_{a_i}.
\end{equation}

Summing over all partitions of \(X\):
\begin{equation}\label{eq:bell-general}
N_{\mathcal B}(n)=\sum_{\pi\vdash [n]}\prod_{A\in\pi} p_{|A|}.
\end{equation}
This is the complete Bell polynomial \(B_n(p_1,\dots,p_n)\) \cite{comtet2012advanced, stanley2011ec1}.

\begin{proof}[Proof of Theorem~\ref{thm:main}]
The exponential formula for set partitions \cite[Theorem 5.1.4]{stanley2011ec1} states that if each block of size \(a\) carries weight \(p_a\), then the EGF for the sum over partitions is \(\exp(\sum_{a\ge1}p_ax^a/a!)\). This gives the first equality in \eqref{eq:general-egf}. The second follows by interchanging sums:
\begin{align}
\sum_{a\ge1} p_a\frac{x^a}{a!}
&=
\sum_{j=1}^r \sum_{a\ge1} \frac{(b_jx)^a}{a!} \\
&=
\sum_{j=1}^r (e^{b_jx}-1).
\end{align}
For the moment form, let \(N_1,\dots,N_r\) be independent Poisson(1) random variables and define \(Z=\sum_{j=1}^r b_jN_j\). Then
\[
\mathbb E[e^{xZ}]
= \prod_{j=1}^r \mathbb E[e^{xb_jN_j}]
= \prod_{j=1}^r \exp(e^{b_jx}-1)
= \exp\!\left(\sum_{j=1}^r (e^{b_jx}-1)\right).
\]
Comparing with the EGF gives \(N_{\mathcal B}(n)=\mathbb E[Z^n]\). 
\end{proof}

A useful refinement of the general formula counts measurable pairs according to the number of atoms in the domain sigma algebra. This partial Bell polynomial refinement generalises the Touchard coefficients and provides additional structural insight.

\begin{corollary}\label{cor:partial}
Let \(N_{\mathcal B}(n,k)\) denote the number of pairs \((\mathcal A,f)\) where \(\mathcal A\) has exactly \(k\) atoms. Then
\[
N_{\mathcal B}(n,k)=\sum_{\substack{\pi\vdash[n]\\|\pi|=k}}\prod_{A\in\pi} p_{|A|},
\]
which is the partial Bell polynomial (in the variables \(p_a\)). The full count satisfies \(N_{\mathcal B}(n)=\sum_{k=1}^n N_{\mathcal B}(n,k)\).
\end{corollary}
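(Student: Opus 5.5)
The plan is to stratify the sum in \eqref{eq:bell-general} by the number of atoms of the domain sigma algebra. By Proposition~\ref{prop:bijection}, sigma algebras \(\mathcal A\) on \(X=[n]\) correspond bijectively to set partitions \(\pi\) of \([n]\), and the number of atoms of \(\mathcal A_\pi\) equals the number of blocks \(|\pi|\). Hence the set of pairs \((\mathcal A,f)\) with \(\mathcal A\) having exactly \(k\) atoms is the disjoint union, over partitions \(\pi\) with \(|\pi|=k\), of the sets of \(\mathcal A_\pi\)-measurable functions. By Proposition~\ref{prop:M}, each such set has cardinality \(M(\mathcal A_\pi,\mathcal B)=\prod_{A\in\pi}p_{|A|}\). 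Summing over these partitions gives the displayed formula for \(N_{\mathcal B}(n,k)\).

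Next I would identify this sum with the partial Bell polynomial \(B_{n,k}(p_1,\dots,p_{n-k+1})\). I would group partitions with \(k\) blocks by their block-size multiset. A pattern with \(j_a\) blocks of size \(a\), where \(\sum_a j_a=k\) and \(\sum_a a j_a=n\), is realised by exactly \(n!/\prod_a (a!)^{j_a} j_a!\) set partitions. Each of these partitions contributes the same weight \(\prod_a p_a^{j_a}\). This recovers the standard closed form of \(B_{n,k}\), as in the \(S(6,3)\) example above. Alternatively, I could mark each block with a variable \(t\) in the exponential formula; the coefficient of \(t^k\) in \(\exp\bigl(t\sum_a p_a x^a/a!\bigr)\) is then \(\frac{1}{k!}\bigl(\sum_a p_a x^a/a!\bigr)^k\), which is the usual EGF of the partial Bell polynomials.

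Finally, the identity \(N_{\mathcal B}(n)=\sum_{k=1}^n N_{\mathcal B}(n,k)\) holds because every partition of \([n]\) with \(n\ge1\) has between \(1\) and \(n\) blocks. So the stratification by \(k\) partitions the index set of \eqref{eq:bell-general}. As a sanity check, when \(\mathcal B\) is discrete, \(p_a=m\) for every \(a\), so \(N_{\mathcal B}(n,k)=S(n,k)m^k\), which recovers Proposition~\ref{prop:discrete}.

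There is no real obstacle here. The only point needing care is the bookkeeping that the atom count of \(\mathcal A\) is preserved under the bijection with partitions, and the matching with the chosen normalisation of partial Bell polynomials.
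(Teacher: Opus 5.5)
Your proposal is correct and follows the same route as the paper: stratify the partition sum \eqref{eq:bell-general} by the number of blocks, using the bijection under which the atoms of \(\mathcal A_\pi\) are exactly the blocks of \(\pi\). Your extra check, via block-size patterns or the marked exponential formula, that each stratum equals the standard \(B_{n,k}(p_1,\dots,p_{n-k+1})\) fills in a step the paper only asserts.
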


\begin{proof}
This follows directly from \eqref{eq:bell-general} by grouping partitions according to the number of blocks. Since a partition with \(k\) blocks contributes exactly the term \(\prod_{A\in\pi}p_{|A|}\) to the full Bell polynomial, the sum over all such partitions gives the partial Bell polynomial in the variables \(p_a\).
\end{proof}

When \(\mathcal B\) is discrete (\(b_j=1\), \(r=m\)), \(Z=\sum_{j=1}^m N_j\sim\operatorname{Poisson}(m)\), recovering \eqref{eq:touchard}. When \(\mathcal B\) is trivial (\(r=1\), \(b_1=m\)), \(Z=mN_1\), giving \(N_{\mathcal B}(n)=m^nB_n\).

The general formula exhibits a monotonicity property with respect to the coarseness of the codomain sigma algebra. The following remark makes this explicit.
 
\begin{remark}
The count \(N_{\mathcal B}(n)\) depends only on the multiset of atom sizes \(\{b_1,\dots,b_r\}\), not on their labels. For fixed \(m\) and \(n\), the trivial codomain type \((m)\) maximises \(N_{\mathcal B}(n)\), and the discrete type \((1,\dots,1)\) minimises it. Indeed, by the theory of majorisation, \((m)\) majorises all partitions of \(m\), and \((1,\dots,1)\) is majorised by all others. Since \(x\mapsto x^a\) is convex on \(\mathbb R_{\ge0}\), the function \(\sum_j b_j^a\) is Schur-convex \cite[Theorem 2.B.2]{marshall2011inequalities}, giving the stated inequalities. Because every term of the complete Bell polynomial is a positive monomial in the \(p_a\), and \(p_a=\sum_j b_j^a\) is Schur-convex in \((b_1,\dots,b_r)\) for fixed \(m\) with \(a\ge1\), monotonicity in each \(p_a\) propagates to \(N_{\mathcal B}(n)\). Hence the trivial type \((m)\) maximises and the discrete type \((1,\dots,1)\) minimises \(N_{\mathcal B}(n)\).
\end{remark}

The following table lists \(N_{\mathcal B}(n)\) for \(m=3\) and the three possible codomain atom-size types. The type is written as a tuple of atom sizes; for example, \((2,1)\) denotes a codomain with one atom of size 2 and one atom of size 1.

\begin{table}[H]
\centering
\caption{\(N_{\mathcal B}(n)\) for \(m=3\) and codomain types}
\label{tab:comparison}
\[
\begin{array}{c|rrrrrr}
\mathcal B\text{-type} & n=1 & 2 & 3 & 4 & 5 & 6 \\ \hline
(1,1,1) & 3 & 12 & 57 & 309 & 1866 & 12351 \\
(2,1) & 3 & 14 & 81 & 551 & 4266 & 36803 \\
(3) & 3 & 18 & 135 & 1215 & 12636 & 147987
\end{array}
\]
\end{table}

The rows increase with codomain coarseness, as expected. For the trivial codomain type \((m)\), Corollary \ref{cor:partial} gives \(N_{\mathcal B}(n)=m^nB_n\). For \(m=3\), this yields the row (3): \(3^nB_n=3,18,135,1215,12636,147987\) for \(n=1,\dots,6\), which matches the table.

We verify one of the entries in Table \ref{tab:comparison} explicitly to illustrate the computation underlying the general formula.

\begin{example}
Let \(Y\) have atoms of sizes \(2\) and \(1\), so \(r=2\), \(b_1=2\), \(b_2=1\), and \(m=3\). Then
\[
p_a=2^a+1^a=2^a+1.
\]
For \(n=1\), the only partition of \([1]\) is \(\{\{1\}\}\), so
\[
N_{\mathcal B}(1)=p_1=2+1=3.
\]
For \(n=2\), the partitions are \(\{\{1,2\}\}\) (weight \(p_2=2^2+1=5\)) and \(\{\{1\},\{2\}\}\) (weight \(p_1^2=3^2=9\)). Thus
\[
N_{\mathcal B}(2)=5+9=14.
\]
For \(n=3\), the partitions are:
\begin{itemize}
\item one block of size 3: \(p_3=2^3+1=9\);
\item one block of size 2 and one of size 1: \(p_2p_1=5\cdot3=15\), with 3 such partitions;
\item three blocks of size 1: \(p_1^3=27\).
\end{itemize}
Thus
\[
N_{\mathcal B}(3)=9+3\cdot15+27=81.
\]
This matches the table entry for type \((2,1)\) at \(n=3\).
\end{example}

\section{Asymptotic and probabilistic remarks}

The EGF \(\exp(\sum_j(e^{b_jx}-1))\) is entire. For fixed \(b_j\), the coefficients grow super-exponentially. When all \(b_j=1\), the Touchard asymptotics \cite{de1981asymptotic, moser1955asymptotic, paris2016touchard}
\[
\log T_n(m)= n\log n - n\log\log n + n\log m + O(n/\log n)
\]
hold for fixed \(m\). A more refined saddle-point analysis identifies the saddle point \(r>0\) satisfying \(m r e^r = n\), hence \(r=W(n/m)\) where \(W\) is the Lambert \(W\)-function. For fixed \(m\), \(r\sim \ln n-\ln\ln n\), yielding
\[
\log T_n(m)=n\log n-n\log\log n+O(n),
\]
and consequently
\[
\lim_{n\to\infty}\frac{\log T_n(m)}{n\log n}=1.
\]
When \(r=1\), \(b_1=m\), the count is \(m^nB_n\), with 
\[
\log(m^nB_n)\sim n\log n-n\log\log n+n\log m.
\]

The previous discussion assumes fixed \(m\). The following remark considers the complementary regime where the codomain size grows with \(n\).

\begin{remark}
When \(m\) grows with \(n\), for instance \(m=n\), the term corresponding to \(k=n\) in the sum \(\sum_{k=1}^n S(n,k)m^k\) equals \(n^n\) (since \(S(n,n)=1\)). This matches the total number of all functions \(X\to Y\) when the sigma algebra on \(X\) is discrete. The remaining terms (with \(k<n\)) involve lower powers of \(m\) and Stirling numbers, and their contribution is of smaller order relative to \(n^n\) for large \(n\), though a precise asymptotic analysis in this regime is more delicate and not pursued here.
\end{remark}

For general \(\mathcal B\), the coefficients \(N_{\mathcal B}(n)\) are given by \(N_{\mathcal B}(n)=\mathbb E[Z^n]\), where \(Z=\sum_{j=1}^r b_jN_j\). Since \(Z\) is a compound Poisson random variable, standard saddle-point methods (see e.g. \cite{flajolet2009}) apply to obtain asymptotics of \(\mathbb E[Z^n]\) for any fixed atom sizes \(b_j\). A detailed asymptotic analysis of \(N_{\mathcal B}(n)\) for general \(\mathcal B\) is a natural next step and would follow from the saddle-point method applied to the EGF; we do not pursue it here.

The cumulants of \(Z=\sum_j b_jN_j\) are \(\kappa_\ell(Z)=\sum_j b_j^\ell=p_\ell\), consistent with \eqref{eq:general-egf}. This provides a probabilistic check: the moments of \(Z\) are precisely the complete Bell polynomials in the cumulants \cite{dinardo2010cumulants}.

\section{Conclusion}

We have enumerated measurable functions between finite measurable spaces. The discrete codomain case yields the Touchard polynomial. The general case yields the complete Bell polynomial in the power sums of the codomain atom sizes, with EGF \(\exp(\sum_j(e^{b_jx}-1))\). The count depends only on the multiset of atom sizes and interpolates between the discrete and trivial extremes. The compound Poisson interpretation connects the enumeration to classical probability theory. The partial Bell refinement \(N_{\mathcal B}(n,k)\) provides additional structure and generalises the Touchard coefficients.

\appendix
\section{Computational Verification}
\label{app:code}

The following self-contained Python code (standard library only) verifies the values in Tables~\ref{tab:touchard} and~\ref{tab:comparison}. It implements Stirling numbers of the second kind via the iterative recurrence 
\[
S(n,k)=kS(n-1,k)+S(n-1,k-1),
\]
then computes the Touchard polynomial \(T_n(m)=\sum_k S(n,k)m^k\) and the general Bell polynomial \(N_{\mathcal{B}}(n)\) for a given list of codomain atom sizes.

\begin{lstlisting}[language=Python,
                   caption={Python code to verify Tables~1 and~2},
                   label={lst:verification}]
import math
from collections import Counter


def stirling2_table(max_n):
    """Return S[n][k] for 0 <= n <= max_n, 0 <= k <= n (iterative)."""
    S = [[0] * (max_n + 1) for _ in range(max_n + 1)]
    S[0][0] = 1
    for n in range(1, max_n + 1):
        for k in range(1, n + 1):
            S[n][k] = k * S[n - 1][k] + S[n - 1][k - 1]
    return S


def touchard(n, m, S):
    """Compute T_n(m) = sum_k S(n,k) m^k using precomputed table S."""
    return sum(S[n][k] * m**k for k in range(1, n + 1))


def generate_partition_shapes(n):
    """Yield integer partition shapes of n as lists of part sizes."""
    def gen(rem, max_part, current):
        if rem == 0:
            yield current[:]
        else:
            for p in range(min(max_part, rem), 0, -1):
                current.append(p)
                yield from gen(rem - p, p, current)
                current.pop()
    yield from gen(n, n, [])


def bell_general(n, atom_sizes):
    """
    Compute N_B(n) for given codomain atom sizes b_1, ..., b_r.
    Uses p_a = sum_j b_j^a and sums over all set partitions of [n].
    """
    def p(a):
        return sum(b**a for b in atom_sizes)

    total = 0
    for shape in generate_partition_shapes(n):
        # Weight of this partition shape: product of p(block_size)
        weight = 1
        for block_size in shape:
            weight *= p(block_size)
        # Number of set partitions with this shape:
        # n! / (prod(block_size!) * prod(multiplicity!))
        cnt = Counter(shape)
        denom = 1
        for sz, mult in cnt.items():
            denom *= math.factorial(sz)**mult * math.factorial(mult)
        multiplicity = math.factorial(n) // denom
        total += multiplicity * weight
    return total


# --- Table 1: Touchard polynomial T_n(m) ---
S = stirling2_table(6)
print("Table 1: T_n(m)")
for n in range(1, 7):
    row = [touchard(n, m, S) for m in range(1, 5)]
    print(f"n={n}: {row}")

# --- Table 2: N_B(n) for m=3, three codomain types ---
print("\nTable 2: N_B(n) for m=3")
for typ in [(1, 1, 1), (2, 1), (3,)]:
    row = [bell_general(n, typ) for n in range(1, 7)]
    print(f"{typ}: {row}")
\end{lstlisting}

Running this code produces the following output, which matches the tables in the main text exactly:

\begin{verbatim}
Table 1: T_n(m)
n=1: [1, 2, 3, 4]
n=2: [2, 6, 12, 20]
n=3: [5, 22, 57, 116]
n=4: [15, 94, 309, 756]
n=5: [52, 454, 1866, 5428]
n=6: [203, 2430, 12351, 42356]

Table 2: N_B(n) for m=3
(1, 1, 1): [3, 12, 57, 309, 1866, 12351]
(2, 1): [3, 14, 81, 551, 4266, 36803]
(3,): [3, 18, 135, 1215, 12636, 147987]
\end{verbatim}

The code is self-contained and can be used to verify the formulas for any \(n\) and \(m\) by adjusting the loop bounds and the \texttt{atom\_sizes} argument to \texttt{bell\_general}.

\bibliographystyle{amsplain}
\bibliography{reference}

\end{document}